\newtheorem{theorem}{Theorem}[section]
\begin{document}
\sloppy

\begin{center}
{\large \textbf{Weak and strong type estimates 
for fractional integral operators on Morrey spaces
in metric measure spaces}}\\[0pt]

\vspace{1cm} {\sc I. Sihwaningrum and Y. Sawano$^*$}

\end{center}

\bigskip

\begin{abstract}
\noindent We discuss here a weak and strong type estimate for fractional
integral operators on Morrey spaces,
where the underlying measure $\mu$ does not always satisfy
the doubling condition.

\medskip

\noindent {\emph{Keywords}:} Weak type estimates, fractional integral operators,
Morrey spaces, non-doubling measure

\medskip

\noindent {\emph{2000 Mathematics Subject Classification}:} 42B20, 26A33,
47B38, 47G10.
\end{abstract}
%

\bigskip

\section{Introduction}

The aim of this paper is to propose
a framework of Morrey spaces and fractional integral operators
when we are given a Radon measure $\mu$
on a metric measure space $(X,d,\mu)$,
where $\mu$ is a Radon measure.

We recall that the Riesz potential $I_\alpha$ on ${\mathbb R}^d$
is given by
\[
I_\alpha f(x)=\int_{{\mathbb R}^d}
\frac{f(y)}{|x-y|^{d-\alpha}}\,dy.
\]
According to the Hardy-Littlewood-Sobolev theorem
\cite{4,5,10},
$I_\alpha$ is bounded
from $L^p({\mathbb R}^d)$ to $L^q({\mathbb R}^d)$
as long as $p,q \in (1,\infty)$ satisfy
$\frac1q=\frac1p-\frac{\alpha}{d}$.
Morrey spaces, named after C.~Morrey,
can also be used to describe the boundedness property
of $I_\alpha$.
Here we adopt the following notation
to denote Morrey spaces.
Let $1 \le q \le p<\infty$.
For a measurable function $f$ on ${\mathbb R}^d$,
we define
\[
\|f\|_{{\mathcal M}^p_q}
:=
\sup\left\{|B|^{\frac1p-\frac1q}\|f\|_{L^q(B)}
\,:\,B\mbox{ is a ball }\right\}.
\]
The space ${\mathcal M}^p_q({\mathbb R}^d)$
denotes the set of all measurable functions $f$
for which the norm
$\|f\|_{{\mathcal M}^p_q}$ is finite.
According to Adams \cite{Ad},
$I_\alpha$ is bounded from ${\mathcal M}^p_q({\mathbb R}^d)$
to ${\mathcal M}^s_t({\mathbb R}^d)$,
provided that $p,q,s,t \in (1,\infty)$ satisfy
$\frac{p}{q}=\frac{t}{s}, \, \frac{1}{q}=\frac{1}{p}-\frac{\alpha}{d}$.

In this paper,
we aim to show that this theorem is independent
from the geometric structure of ${\mathbb R}^d$
by extending it to metric measure spaces,
where all we have are
the distance function $d$
and the Radon measure $\mu$.

Let $(X, d, \mu)$ be a metric measure space
with a distance function $d$ and a Borel measure $\mu$.
Recall that the measure $\mu$ is a doubling measure
if it satisfies the so-called {\it doubling condition},
that is, there exists a constant $C>0$ such that
\begin{equation}\label{E1}
    \mu (B(a,2r)) \le C\mu (B(a,r))
\end{equation}
for every ball $B(a,r)$ with center $a\in X$ and radius
$r>0$.
The doubling condition was a key property
in classical harmonic analysis
but around a decade ago,
it turned out to be unnecessary.
The point is that we modify the related definitions.
Indeed, 
in the present paper,
we propose to redefine the fractional integral operator
by
\begin{equation}\label{E3}
    I_\alpha f(x)
:=
\int_X \frac{f(y)}{\mu(B(x,2d(x,y)))^{1-\alpha}}\;d\mu(y).
\end{equation}
Note that the definition is independent
of any notion of dimensions.
The same can be said for Morrey spaces,
which we define now.
For $k>0, 1\le p<\infty$ and $f\in L^1_{\textrm{loc}}(\mu)$,
the norm is given by
\begin{align*}
\lefteqn{
\|f\|_{\mathcal{M}^p_1(k,\mu)}
}\\
&:=\sup
\left\{
\mu(B(x,k r))^{1/p-1}\|\chi_{B(x,r)}f\|_{L^1(\mu)}
\,:\,x \in X, \, r>0, \, \mu(B(x,r))>0
\right\},
\end{align*}
where $\chi_{B(x,r)}$ denotes
the characteristic function of the ball $B(x,r)$.

We will prove here that $I_\alpha$ satisfies
weak and strong type estimates 
on Morrey spaces.
Our main results are:
\begin{theorem}\label{T3}
If $1<p<\infty$, $1<s<\infty$, $0<\alpha<\frac{1}{p}$ 
and $\frac{1}{s}=\frac{1}{p}-\alpha$, then there exists $C>0$ such that
\[
\mu\{x\in B(a,r):I_\alpha f(x)>\gamma \}\le C\mu (B(a,6r))^{1-1/p}\left(\frac{\|f\|_{\mathcal{M}^p_1(2,\mu)}}{\gamma}\right)^{s/p}
\]
for all positive $\mu$-measurable functions $f$.
\end{theorem}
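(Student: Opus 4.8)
The plan is to reduce the estimate, by way of a Hedberg--Adams type pointwise bound, to a weak $(1,1)$ estimate for a Hardy--Littlewood maximal operator, the passage being carried out through a ``layer cake'' rewriting of $I_\alpha$ that substitutes for the missing dilation invariance. By positive homogeneity one may assume $\|f\|_{\mathcal{M}^p_1(2,\mu)}=1$. Since $0<\alpha<1$,
\[
I_\alpha f(x)=(1-\alpha)\int_0^\infty u^{\alpha-2}\Phi_x(u)\,du,\qquad \Phi_x(u):=\int_{\{y\,:\,\mu(B(x,2d(x,y)))\le u\}}f\,d\mu .
\]
Because $\rho\mapsto\mu(B(x,2\rho))$ is nondecreasing, the sublevel set here is, up to a $\mu$-null sphere that is harmless for all but countably many $u$, a ball $B(x,\rho_u)$ with $\mu(B(x,2\rho_u))\le u$. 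Hence, using the Morrey normalisation with $k=2$ (so that $\int_{B(x,\rho)}f\,d\mu\le\mu(B(x,2\rho))^{1-1/p}$) and writing
\[
Mf(x):=\sup_{\rho>0}\frac{1}{\mu(B(x,2\rho))}\int_{B(x,\rho)}f\,d\mu ,
\]
one gets the two bounds $\Phi_x(u)\le u^{1-1/p}$ and $\Phi_x(u)\le u\,Mf(x)$. Splitting the $u$-integral at $u_\ast:=Mf(x)^{-p}$, using the second bound for $u\le u_\ast$ and the first for $u>u_\ast$, the two pieces converge (at $0$ because $\alpha>0$, at $\infty$ because $\alpha<1/p$) and combine, after tidying the exponents by means of $1-p\alpha=p/s$, to
\[
I_\alpha f(x)\le C\,\bigl(Mf(x)\bigr)^{p/s} .
\]
It should be stressed that the three occurrences of the constant $2$ — in the kernel of $I_\alpha$, in the Morrey norm, and in the definition of $M$ — are forced to coincide, and that this is precisely what makes the above step work.

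Next comes the localisation. By the pointwise inequality, $\{x\in B(a,r):I_\alpha f(x)>\gamma\}\subseteq\{x\in B(a,r):Mf(x)>\lambda\}$ with $\lambda:=(\gamma/C)^{s/p}$. For $x\in B(a,r)$ and $\rho>2r$ one has $B(a,\rho)\subseteq B(x,2\rho)$ and $\mu(B(x,2\rho))\ge\mu(B(a,2r))$, so $\mu(B(x,2\rho))^{-1}\int_{B(x,\rho)}f\,d\mu\le\mu(B(x,2\rho))^{-1/p}\le\mu(B(a,2r))^{-1/p}$; since $B(x,\rho)\subseteq B(a,3r)$ when $\rho\le 2r$, this yields $Mf(x)\le M\!\bigl(f\chi_{B(a,3r)}\bigr)(x)+\mu(B(a,2r))^{-1/p}$ on $B(a,r)$. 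If $\lambda\le 2\mu(B(a,2r))^{-1/p}$, then $\{x\in B(a,r):Mf(x)>\lambda\}$ has measure at most $\mu(B(a,r))$, and the constraint on $\lambda$ (hence on $\gamma$) makes $\mu(B(a,6r))^{1-1/p}\gamma^{-s/p}$ dominate $\mu(B(a,r))$; otherwise $\{x\in B(a,r):Mf(x)>\lambda\}\subseteq\{x:M(f\chi_{B(a,3r)})(x)>\lambda/2\}$, and the weak $(1,1)$ bound for $M$ (a Vitali-type covering argument) together with $\|f\chi_{B(a,3r)}\|_{L^1(\mu)}=\int_{B(a,3r)}f\,d\mu\le\mu(B(a,6r))^{1-1/p}$ (the Morrey norm on $B(a,3r)$, the factor $k=2$ turning $3r$ into $6r$) bounds its measure by $C\lambda^{-1}\mu(B(a,6r))^{1-1/p}$. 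Substituting $\lambda=(\gamma/C)^{s/p}$ and simplifying once more through $1-p\alpha=p/s$ produces exactly $C\mu(B(a,6r))^{1-1/p}\gamma^{-s/p}$, which under the normalisation $\|f\|_{\mathcal{M}^p_1(2,\mu)}=1$ is the assertion.

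The main obstacle is the failure of the doubling condition: the dyadic-annulus estimates that would immediately control $I_\alpha$ in $\mathbb{R}^d$ collapse because $\mu(B(x,2\rho))$ and $\mu(B(x,\rho))$ need not be comparable. The remedy is the layer-cake representation above, whose sublevel sets are balls already ``normalised'' by the threshold $u$, so that no comparison of measures of concentric balls of different radii is ever invoked. The remaining delicate point — and the reason why the constant $6=2\cdot 3$ enters the statement — is that the dilation factor $2$ in $I_\alpha$, the factor $k=2$ in the Morrey norm, and the enlargement factor used in the covering lemma for $M$ must be kept mutually compatible throughout.
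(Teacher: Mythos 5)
Your argument is correct and follows essentially the paper's own route: a Hedberg-type pointwise bound $I_\alpha f(x)\le C\,(M_2f(x))^{1-p\alpha}\|f\|_{\mathcal{M}^p_1(2,\mu)}^{p\alpha}$ (the paper proves this by a dyadic decomposition in the variable $\mu(B(x,2d(x,y)))$ via the radii $R_k(x)$, of which your layer-cake integral in $u$ is just the continuous version), followed by a weak-type Morrey estimate for the modified maximal operator obtained by splitting $f$ into $f\chi_{B(a,3r)}$, handled by the weak $(1,1)$ inequality for $M_2$, and a tail controlled directly by the Morrey norm. The differences are purely presentational, and the measure-theoretic caveat you flag about spheres of positive measure is the same technicality the paper addresses with its $\varepsilon$-regularization of the balls.
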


\begin{theorem}\label{T6}
If $1<q \le p<\infty$, $1<s<\infty$, $0<\alpha<\frac{1}{p}$,
$\frac{q}{p}=\frac{t}{s}$ and $\frac{1}{s}=\frac{1}{p}-\alpha$, 
then there exists $C>0$ such that
\[
\|I_\alpha f\|_{\mathcal{M}^s_t(6,\mu)}
\le C
\|f\|_{\mathcal{M}^p_q(2,\mu)}
\]
for all positive $\mu$-measurable functions $f$.
\end{theorem}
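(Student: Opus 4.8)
The plan is to adapt Adams' proof \cite{Ad} of the Morrey bound for the Riesz potential to the metric, non-doubling setting. For $f\ge 0$ introduce the modified centred maximal function
\[
M^{(2)}f(x):=\sup_{\rho>0}\frac{1}{\mu(B(x,2\rho))}\int_{B(x,\rho)}f\,d\mu ,
\]
whose enlargement factor is exactly the factor $2$ occurring in the kernel of $I_\alpha$. The heart of the proof is a pointwise Hedberg-type inequality: there is $C>0$ such that for all $x\in X$ and all $T>0$,
\[
I_\alpha f(x)\le C\Bigl(T^{\alpha}\,M^{(2)}f(x)+T^{\alpha-1/p}\,\|f\|_{\mathcal{M}^p_q(2,\mu)}\Bigr).
\]

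To prove this I would split $I_\alpha f(x)$ according to the size of $\mu(B(x,2d(x,y)))$ rather than of $d(x,y)$ (the latter being useless without doubling): with $D_k:=\{y:2^k\le\mu(B(x,2d(x,y)))<2^{k+1}\}$, the kernel is $\le 2^{k(\alpha-1)}$ on $D_k$, and since every $y\in D_k$ lies in the ball $B(x,2d(x,y))$, which is centred at $x$ and has measure $<2^{k+1}$, one gets $D_k\subset B(x,\sigma_k)$ with $\mu(B(x,\sigma_k))\le 2^{k+1}$. For the levels with $2^k\le T$ one bounds $\int_{D_k}f\,d\mu\le\mu(B(x,\sigma_k))\,M^{(2)}f(x)\le 2^{k+1}M^{(2)}f(x)$ — this is precisely where the enlargement in $M^{(2)}$ is used — and since $\alpha>0$ the series $\sum_k 2^{k\alpha}$ converges and yields the first term. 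For the levels with $2^k>T$, Hölder's inequality on $B(x,\sigma_k)$ together with the definition of $\|f\|_{\mathcal{M}^p_q(2,\mu)}$ gives $\int_{D_k}f\,d\mu\le\mu(B(x,\sigma_k))^{1-1/q}\mu(B(x,2\sigma_k))^{1/q-1/p}\|f\|_{\mathcal{M}^p_q(2,\mu)}$, and the resulting series converges because $\alpha<1/p$. Tracking how the dilations of the balls $B(x,\sigma_k)$ accumulate — each passage from one level to the next costs a bounded dilation — is what forces the constant $2$ on the source side and $6$ on the target side.

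Granting this, optimise in $T$ by balancing the two terms, $T^{1/p}\sim\|f\|_{\mathcal{M}^p_q(2,\mu)}/M^{(2)}f(x)$; the relations $1/s=1/p-\alpha$ and $q/p=t/s$ (whence $1-p\alpha=p/s=q/t$) give
\[
I_\alpha f(x)\le C\,\|f\|_{\mathcal{M}^p_q(2,\mu)}^{\,1-q/t}\bigl(M^{(2)}f(x)\bigr)^{q/t}.
\]
Substituting this into $\|I_\alpha f\|_{\mathcal{M}^s_t(6,\mu)}$ and using the identity $(1/s-1/t)\,t/q=1/p-1/q$ (again the relation $q/p=t/s$) leaves exactly the maximal inequality
\[
\|M^{(2)}f\|_{\mathcal{M}^p_q(6,\mu)}\le C\,\|f\|_{\mathcal{M}^p_q(2,\mu)} ,
\]
which follows from the $L^q(\mu)$-boundedness of $M^{(2)}$ ($q>1$) by a localisation argument: split $f$ at a ball $B(a,3r)$, so that its $2$-dilation is $B(a,6r)$ and the dilations cancel, and estimate the far part on $B(a,r)$ by averages of $f$ over large concentric balls, which are kept under control by the same non-doubling tail estimates as above.

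The hard part is the first step, and with it everything that "non-doubling" conceals. On ${\mathbb R}^d$ the local part of $I_\alpha f$ costs only a convergent geometric series; here $\mu(B(x,2^{-j}r))$ need not decay geometrically, or even be comparable to $\mu(B(x,2^{-j-1}r))$, so the naive dyadic estimate diverges and one is forced to organise the sum along the level sets $D_k$, to exploit the factor $2$ in $B(x,2d(x,y))$ in order to absorb the enlargements present both in $M^{(2)}$ and in the Morrey norm, and to control the accumulation of the dilation constants. A further, purely technical wrinkle is that $\mu$ may charge spheres, so that some open balls have to be replaced by slightly larger or closed ones when comparing consecutive $D_k$; the slack in the constants $2$ and $6$ absorbs this. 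The only other nontrivial input is the $L^q(\mu)$-boundedness of $M^{(2)}$ for non-doubling $\mu$, which here plays the role of the Hardy--Littlewood maximal theorem; Theorem~\ref{T3} is the weak-type analogue of the present statement and is not needed in the proof.
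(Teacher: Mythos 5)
Your proposal follows essentially the same route as the paper: a Hedberg-type pointwise estimate obtained by decomposing $I_\alpha f(x)$ along the level sets of $\mu(B(x,2d(x,y)))$ (the paper's radii $R_k(x)=\inf\{R>0:\mu(B(x,2R))>2^k\}$ implement exactly your $D_k$-decomposition), optimized to give $I_\alpha f(x)\le C\,M_2f(x)^{1-p\alpha}\|f\|^{p\alpha}$, combined with the Morrey boundedness $\|M_2f\|_{\mathcal{M}^p_q(6,\mu)}\le C\|f\|_{\mathcal{M}^p_q(2,\mu)}$ (the paper's Theorems \ref{T2} and \ref{T7}). The only cosmetic difference is that the paper states the Hedberg inequality with the slightly smaller norm $\|f\|_{\mathcal{M}^p_1(2,\mu)}$ in place of $\|f\|_{\mathcal{M}^p_q(2,\mu)}$; both suffice here.
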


It hardly looks likely
to replace $2d(x,y)$ with $d(x,y)$
in the definition of fractional integral operators
and have the similar results
according to the example in
\cite[Section 2]{SS-colm}.
The proof is a future work.

\smallskip

\section{Main Results}

\noindent
We define, for $k>0$,
the centered maximal operator
\[
    M_kf(x)
:=
\sup_{r>0}\frac{1}{\mu(B(x,kr))}\int_{B(x,r)}|f(y)|\; d\mu (y)
\quad (x \in {\rm supp}(\mu)).
\]
For the maximal operator $M_2$,
we prove the following boundedness property
on Morrey spaces.
\begin{theorem}\label{T1}
For any $\gamma >0$,
any positive $\mu$-measurable function
$k$ 
and 
any ball $B(a,r)$,
\[
\mu\{x\in B(a,r):M_2f(x)>\gamma\}
\le
4\frac{\mu(B(a,6r))^{1-1/p}}{\gamma}
\|f\|_{\mathcal{M}^p_1(2,\mu)}.
\]
\end{theorem}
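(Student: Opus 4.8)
The plan is to reduce the estimate, via a size dichotomy on the radii selected from the definition of $M_2$, to a localised weak type $(1,1)$ bound for $M_2$ applied to a truncation of $f$, and to establish that bound by a Vitali-type covering argument matched to the non-doubling measure $\mu$. First the trivial cases: if $\|f\|_{\mathcal{M}^p_1(2,\mu)}=0$ then $f=0$ $\mu$-a.e. and $M_2f\equiv 0$, while (for $p>1$, so that $1-1/p>0$) if $\mu(B(a,6r))=\infty$ the right-hand side is infinite; so assume $0<\|f\|_{\mathcal{M}^p_1(2,\mu)}<\infty$ and $\mu(B(a,6r))<\infty$. Put $E:=\{x\in B(a,r):M_2f(x)>\gamma\}$. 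For each $x\in E$ the definition of $M_2$ provides a radius $r_x>0$ with $\gamma\,\mu(B(x,2r_x))<\int_{B(x,r_x)}f\,d\mu$; in particular $\mu(B(x,r_x))>0$, so the definition of the Morrey norm (applied to the ball $B(x,r_x)$) gives $\int_{B(x,r_x)}f\,d\mu\le\mu(B(x,2r_x))^{1-1/p}\|f\|_{\mathcal{M}^p_1(2,\mu)}$, whence
\[
\mu(B(x,2r_x))<\Lambda:=\Bigl(\tfrac{\|f\|_{\mathcal{M}^p_1(2,\mu)}}{\gamma}\Bigr)^{p}.
\]

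Next I would split $E$ according to the size of the witnessing radius. Let $E_{\mathrm{far}}$ consist of those $x\in E$ for which \emph{every} admissible $r_x$ exceeds $2r$, and set $E_{\mathrm{near}}:=E\setminus E_{\mathrm{far}}$. If $x\in E_{\mathrm{far}}$, fix an admissible $r_x>2r$; since $d(a,x)<r$ one has $B(a,3r)\subseteq B(x,2r_x)$, hence $\mu(B(a,3r))\le\mu(B(x,2r_x))<\Lambda$, and therefore
\[
\mu(E_{\mathrm{far}})\le\mu(B(a,r))=\mu(B(a,r))^{1/p}\,\mu(B(a,r))^{1-1/p}\le\Lambda^{1/p}\,\mu(B(a,6r))^{1-1/p}=\frac{\|f\|_{\mathcal{M}^p_1(2,\mu)}}{\gamma}\,\mu(B(a,6r))^{1-1/p},
\]
which already beats the claimed bound on this piece, with constant $1$ (and if $E_{\mathrm{far}}=\emptyset$ there is nothing to do). If $x\in E_{\mathrm{near}}$, fix an admissible $r_x\le 2r$; then $B(x,r_x)\subseteq B(a,3r)$, so for $g:=\chi_{B(a,3r)}f$ we get $\int_{B(x,r_x)}f\,d\mu=\int_{B(x,r_x)}g\,d\mu$ and hence $M_2g(x)>\gamma$. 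Thus $E_{\mathrm{near}}\subseteq\{M_2g>\gamma\}$, and applying the Morrey norm to the ball $B(a,3r)$,
\[
\|g\|_{L^1(\mu)}=\int_{B(a,3r)}f\,d\mu\le\mu(B(a,6r))^{1-1/p}\,\|f\|_{\mathcal{M}^p_1(2,\mu)}.
\]
So it remains to prove a weak type $(1,1)$ estimate $\mu\{M_2g>\gamma\}\le\frac{C_0}{\gamma}\|g\|_{L^1(\mu)}$ with $C_0\le 3$; combining it with the bound for $E_{\mathrm{far}}$ yields the constant $4$.

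For that last estimate I would argue by truncation and covering. Fix $R>0$ and set $M_2^{R}g(x):=\sup_{0<\rho<R}\mu(B(x,2\rho))^{-1}\int_{B(x,\rho)}g\,d\mu$; it is enough to bound $\mu\{M_2^{R}g>\gamma\}$ uniformly in $R$, since $\{M_2g>\gamma\}$ is the increasing union of the sets $\{M_2^{R}g>\gamma\}$. For each $x$ in this level set pick an admissible $r_x<R$ as above; the balls $\{B(x,r_x)\}$ have radii at most $R$ and cover the level set, so a covering lemma extracts a pairwise disjoint subfamily $\{B(x_j,r_j)\}$ whose suitable dilates cover the level set, while disjointness together with $\gamma\,\mu(B(x_j,2r_j))<\int_{B(x_j,r_j)}g\,d\mu$ gives $\sum_j\mu(B(x_j,2r_j))<\gamma^{-1}\|g\|_{L^1(\mu)}$. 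The main obstacle is precisely the passage from the dilated covering balls back to the balls $B(x_j,2r_j)$: since $\mu$ need not be doubling, no dilation factor strictly larger than $2$ can be afforded, whereas a generic metric-space Vitali selection only produces dilation about $3$ (or $5$). Overcoming this is exactly what the factor $2$ hard-wired into the definitions of $M_2$ and of $I_\alpha$ is for: one runs the selection greedily by (almost) decreasing radius, so that a discarded ball $B(x,r_x)$ meeting a selected $B(x_j,r_j)$ has $r_j$ comparable to $r_x$ and sits in a dilate of $B(x_j,r_j)$ whose measure must be brought back under control by $\mu(B(x_j,2r_j))$, and one then tracks the constants so that the enlargement producing $\mu(B(a,6r))$ is no larger than $6r$ and the overall constant is at most $4$. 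Carrying out this covering step in a way compatible with the non-doubling hypothesis, and keeping the stated constants, is the only delicate point; the rest is the bookkeeping above.
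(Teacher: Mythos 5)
Your overall architecture is the same as the paper's: the paper splits $f$ into $\chi_{B(a,3r)}f$ and $\chi_{X\setminus B(a,3r)}f$, handles the first piece by the weak type $(1,1)$ inequality for $M_2$ and the second by observing that any ball meeting both $B(a,r)$ and $X\setminus B(a,3r)$ engulfs $B(a,r)$ once doubled, so that its double has measure at least $\mu(B(a,r))$ and at most $(\|f\|_{\mathcal{M}^p_1(2,\mu)}/\gamma)^p$. Your split of the level set by the size of the witnessing radius is an equivalent implementation: your $E_{\mathrm{near}}$ reduction to $M_2[\chi_{B(a,3r)}f]$ and your $E_{\mathrm{far}}$ estimate $\mu(E_{\mathrm{far}})\le\gamma^{-1}\mu(B(a,6r))^{1-1/p}\|f\|_{\mathcal{M}^p_1(2,\mu)}$ are both correct (the latter is in fact a cleaner rendering of the paper's corresponding display).

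The gap is the weak type $(1,1)$ inequality for $M_2$ itself, which you reduce everything to but do not prove. Your covering sketch does not close: a greedy selection by (almost) decreasing radius only shows that a point $x$ of the level set whose ball $B(x,r_x)$ meets a selected ball $B(x_j,r_j)$ with $r_x\le 2r_j$ satisfies $d(x,x_j)<r_x+r_j\le 3r_j$, so the level set is covered by the \emph{triples} of the disjoint selected balls, and without the doubling condition $\mu(B(x_j,3r_j))$ is not controlled by $\mu(B(x_j,2r_j))$. This factor $3$ is intrinsic to the Vitali-type selection, and no amount of constant tracking removes it; passing from dilation $3$ down to dilation $2$ is exactly the nontrivial content of the covering lemma of \cite{S1} and of the outer-measure argument of \cite{T}, which is why the paper imports the weak $(1,1)$ boundedness of $M_2$ from those references rather than proving it. If you likewise quote that result, your argument is complete and the bookkeeping (constant $1$ for the far part plus the weak $(1,1)$ constant for the near part) yields the stated bound; as a self-contained proof, the decisive step is missing.
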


\begin{proof}
We actually prove
\begin{equation}\label{eq:120702-1}
\mu\{x\in B(a,r):M_2f(x)>2\gamma\}
\le 2
\frac{\mu(B(a,6r))^{1-1/p}}{\gamma}
\|f\|_{\mathcal{M}^p_1(2,\mu)}.
\end{equation}
Once we prove
\begin{equation}\label{eq:120702-2}
\mu\{x\in B(a,r):M_2[\chi_{B(a,3r)}f](x)>\gamma\}
\le
\frac{\mu(B(a,6r))^{1-1/p}}{\gamma}
\|f\|_{\mathcal{M}^p_1(2,\mu)}
\end{equation}
and
\begin{equation}\label{eq:120702-3}
\mu\{x\in B(a,r):M_2[\chi_{X \setminus B(a,r)}f](x)>\gamma\}
\le
\frac{\mu(B(a,6r))^{1-1/p}}{\gamma}
\|f\|_{\mathcal{M}^p_1(2,\mu)},
\end{equation}
then estimate (\ref{eq:120702-1})
follows automatically.
Estimate (\ref{eq:120702-2})
follows from the weak-$L^1(\mu)$ boundedness
of $M_2$ (see \cite{S1,T}).

Denote by ${\mathcal B}(\mu)$
the set of all balls with positive $\mu$-measure.
A geometric observation shows that
\[
M_2[\chi_{X \setminus B(a,3r)}f](x)
\le
\sup_{\substack{B\in {\mathcal B}(\mu), \, B \cap B(a,r) \ne \emptyset,
\\ B \cap (X \setminus B(a,3r))\ne \emptyset}}
\frac{1}{\mu(2B)}\int_B|f(y)|\,d\mu(y).
\]
Let $B$ be a ball which intersects
both $B(a,r)$ and $X \setminus B(a,3r)$.
The ball $B$ engulfs $B(a,r)$
if we double the radius of $B$.
Thus,
\begin{align*}
\lefteqn{
\mu(B(a,6r))^{1/p-1}
\mu\{x\in B(a,r):M_2[\chi_{B(a,3r)}f](x)>\gamma\}
}\\
&\le
\mu(B(a,r))^{1/p-1}
\sup_{\substack{B\in {\mathcal B}(\mu), \, B \cap B(a,r) \ne \emptyset,
\\ B \cap (X \setminus B(a,3r))\ne \emptyset}}
\frac{1}{\mu(2B)}\int_B|f(y)|\,d\mu(y)\\
&\le
\sup_{\substack{B\in {\mathcal B}(\mu), \, B \cap B(a,r) \ne \emptyset,
\\ B \cap (X \setminus B(a,3r))\ne \emptyset}}
\frac{\mu(2B)^{1/p}}{\mu(2B)}\int_B|f(y)|\,d\mu(y)\\
&\le
\|f\|_{{\mathcal M}^p_1(2,\mu)}.
\end{align*}
Thus, (\ref{eq:120702-3}) follows.
\end{proof}

Analogously, the following inequality holds:
\begin{theorem}\label{T7}
Let $1<q \le p<\infty$.
Then there exists $C>0$ such that
\[
\|M_2f\|_{\mathcal{M}^p_q(6,\mu)}
\le C
\|f\|_{\mathcal{M}^p_q(2,\mu)}
\]
for all positive $\mu$-measurable functions.
\end{theorem}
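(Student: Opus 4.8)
The plan is to follow the two-part decomposition used in the proof of Theorem~\ref{T1}, feeding in the strong $L^q(\mu)$-boundedness of $M_2$ where that proof used the weak-$(1,1)$ bound. Fix a ball $B(a,r)$ with $\mu(B(a,r))>0$; it suffices to show
\[
\mu(B(a,6r))^{1/p-1/q}\,\|\chi_{B(a,r)}M_2f\|_{L^q(\mu)}\le C\|f\|_{\mathcal{M}^p_q(2,\mu)}
\]
with $C$ independent of $a,r$, and then to take the supremum over all such balls. Write $f=\chi_{B(a,3r)}f+\chi_{X\setminus B(a,3r)}f$; by sublinearity of $M_2$ it is enough to estimate the two resulting pieces.

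For the local piece I would invoke the boundedness of $M_2$ on $L^q(\mu)$ for $1<q<\infty$, which follows from the weak-$(1,1)$ estimate cited in \cite{S1,T}, the trivial bound $\|M_2g\|_{L^\infty(\mu)}\le\|g\|_{L^\infty(\mu)}$, and Marcinkiewicz interpolation. Then $\|\chi_{B(a,r)}M_2[\chi_{B(a,3r)}f]\|_{L^q(\mu)}\le C\|\chi_{B(a,3r)}f\|_{L^q(\mu)}$, and applying the definition of $\|\cdot\|_{\mathcal{M}^p_q(2,\mu)}$ to the ball $B(a,3r)$ gives $\|\chi_{B(a,3r)}f\|_{L^q(\mu)}\le\mu(B(a,6r))^{1/q-1/p}\|f\|_{\mathcal{M}^p_q(2,\mu)}$; multiplying by $\mu(B(a,6r))^{1/p-1/q}$ settles this piece.

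For the global piece I would reuse the geometric observation from the proof of Theorem~\ref{T1}: for $x\in B(a,r)$,
\[
M_2[\chi_{X\setminus B(a,3r)}f](x)\le\sup_{\substack{B\in\mathcal{B}(\mu),\ B\cap B(a,r)\ne\emptyset,\\ B\cap(X\setminus B(a,3r))\ne\emptyset}}\frac{1}{\mu(2B)}\int_B|f(y)|\,d\mu(y),
\]
and each competing ball $B$ engulfs $B(a,r)$ once its radius is doubled, so $\mu(2B)\ge\mu(B(a,r))$. For such a $B$, Hölder's inequality together with the Morrey bound $\|\chi_Bf\|_{L^q(\mu)}\le\mu(2B)^{1/q-1/p}\|f\|_{\mathcal{M}^p_q(2,\mu)}$ (and $\mu(B)\le\mu(2B)$, $q\ge1$) gives
\[
\frac{1}{\mu(2B)}\int_B|f|\,d\mu\le\frac{\mu(B)^{1-1/q}}{\mu(2B)}\|\chi_Bf\|_{L^q(\mu)}\le\mu(2B)^{-1/p}\|f\|_{\mathcal{M}^p_q(2,\mu)}\le\mu(B(a,r))^{-1/p}\|f\|_{\mathcal{M}^p_q(2,\mu)}.
\]
Hence $M_2[\chi_{X\setminus B(a,3r)}f]\le\mu(B(a,r))^{-1/p}\|f\|_{\mathcal{M}^p_q(2,\mu)}$ on $B(a,r)$, so that
\[
\mu(B(a,6r))^{1/p-1/q}\|\chi_{B(a,r)}M_2[\chi_{X\setminus B(a,3r)}f]\|_{L^q(\mu)}\le\Bigl(\tfrac{\mu(B(a,r))}{\mu(B(a,6r))}\Bigr)^{1/q-1/p}\|f\|_{\mathcal{M}^p_q(2,\mu)}\le\|f\|_{\mathcal{M}^p_q(2,\mu)},
\]
the last step because $q\le p$ makes $1/q-1/p\ge0$ while $\mu(B(a,r))\le\mu(B(a,6r))$. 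Adding the two estimates and taking the supremum over $B(a,r)$ yields the theorem.

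The only ingredient that is not routine bookkeeping is the endpoint input: one must be certain that the weak-$(1,1)$ — equivalently, the $L^q(\mu)$ — boundedness of $M_2$ holds for a general Radon measure $\mu$ with no doubling assumption. This is exactly where the dilated ball $B(x,2r)$ in the denominator of $M_2$, rather than $B(x,r)$, is indispensable, the argument in \cite{S1,T} running through a Besicovitch-type covering lemma; everything downstream is Hölder's inequality and the monotonicity of $\mu$ on balls.
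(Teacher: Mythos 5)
Your argument is correct and is exactly the route the paper intends: it skips the proof of Theorem~\ref{T7} with the remark that one repeats the two-part decomposition from Theorem~\ref{T1}, replacing the weak-$(1,1)$ input by the $L^q(\mu)$-boundedness of $M_2$ from \cite{S1}, which is precisely what you carry out (your handling of the far part via H\"older's inequality and the ``engulfing'' observation, and the sign checks $1/q-1/p\ge 0$, are all sound). No gaps to report.
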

The proof of Theorem \ref{T7}
being similar to that of Theorem \ref{T1},
we skip the proof,
which is based on the $L^q(\mu)$-boundedness 
of $M_2$ established in \cite{S1}.

Next, we prove a Hedberg type estimate \cite{6}.

\begin{theorem}\label{T2}
If $1<p<\infty$ and $0<\alpha<\frac{1}{p}$,
then there exists $C>0$ such that
\[
|I_\alpha f(x)|
\le C
M_2f(x)^{1-p\alpha}\|f\|_{\mathcal{M}^p_1(2,\mu)}^{p\alpha}
\quad (x \in X)
\]
for all positive $\mu$-measurable functions.
\end{theorem}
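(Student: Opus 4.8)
The plan is to represent $I_\alpha f(x)$ by a layer-cake formula in the size of the kernel, estimate the resulting level function in two complementary ways, and then optimise a single parameter. Since $0<\alpha<\frac{1}{p}<1$, I would apply the identity $u^{\alpha-1}=(1-\alpha)\int_u^\infty t^{\alpha-2}\,dt$ with $u=\mu(B(x,2d(x,y)))$ together with Tonelli's theorem to write
\[
I_\alpha f(x)=(1-\alpha)\int_0^\infty t^{\alpha-2}\,G(t)\,dt,
\qquad
G(t):=\int_{E_t}f\,d\mu,\qquad E_t:=\{y\in X:\mu(B(x,2d(x,y)))<t\}.
\]
The structural observation that drives the proof is that $E_t$ is a ball centred at $x$: because $r\mapsto\mu(B(x,r))$ is non-decreasing, $E_t=B(x,\rho_t)$ with $\rho_t:=\frac{1}{2}\sup\{r>0:\mu(B(x,r))<t\}$, and---this is the point---the dilated ball satisfies $\mu(B(x,2\rho_t))\le t$. (This uses only the monotonicity of $r\mapsto\mu(B(x,r))$; the possible jumps of this function, together with the open/closed ball ambiguity, are dealt with by a standard limiting argument and change nothing essential.)

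With this in hand I would estimate $G(t)$ in two ways. First, from the definition of the centred maximal operator $M_2$, taking radius $\rho_t$,
\[
G(t)=\int_{B(x,\rho_t)}f\,d\mu\le\mu(B(x,2\rho_t))\,M_2f(x)\le t\,M_2f(x).
\]
The essential feature is that the dilation constant $2$ in $M_2$ is the same one appearing in the kernel of $I_\alpha$, which is exactly why the ball $B(x,2\rho_t)$ ``seen'' by $M_2$ coincides with the ball whose measure is controlled by $t$; this is the precise place where the absence of the doubling condition would otherwise break the argument. Repeating the computation with the Morrey norm in place of $M_2$ (using $\int_{B(x,r)}f\,d\mu\le\mu(B(x,2r))^{1-1/p}\|f\|_{\mathcal M^p_1(2,\mu)}$), I would get
\[
G(t)\le\mu(B(x,2\rho_t))^{1-1/p}\,\|f\|_{\mathcal M^p_1(2,\mu)}\le t^{\,1-1/p}\,\|f\|_{\mathcal M^p_1(2,\mu)}.
\]

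Finally I would feed the better of the two bounds into the layer-cake formula. Writing $A:=M_2f(x)$, $B:=\|f\|_{\mathcal M^p_1(2,\mu)}$ and $t_\ast:=(B/A)^p$, this amounts to using $G(t)\le tA$ on $(0,t_\ast)$ and $G(t)\le t^{1-1/p}B$ on $(t_\ast,\infty)$, so that
\[
I_\alpha f(x)\le(1-\alpha)\left(A\int_0^{t_\ast}t^{\alpha-1}\,dt+B\int_{t_\ast}^\infty t^{\alpha-1-1/p}\,dt\right).
\]
The first integral converges at $t=0$ precisely because $\alpha>0$, and equals $\alpha^{-1}t_\ast^{\alpha}$; the second converges at $t=\infty$ precisely because $\alpha<\frac{1}{p}$, and equals $(\frac{1}{p}-\alpha)^{-1}t_\ast^{\alpha-1/p}=s\,t_\ast^{\alpha-1/p}$. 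Using the relation $\frac{1}{s}=\frac{1}{p}-\alpha$ and the choice of $t_\ast$ one checks $A\,t_\ast^{\alpha}=B\,t_\ast^{\alpha-1/p}=A^{1-p\alpha}B^{p\alpha}$, which gives $|I_\alpha f(x)|\le C\,M_2f(x)^{1-p\alpha}\|f\|_{\mathcal M^p_1(2,\mu)}^{p\alpha}$ with $C=(1-\alpha)(\alpha^{-1}+s)$. The main obstacle I expect is the structural reduction identifying $E_t$ with a ball and pinning the measure of its dilate to $t$: that is where the non-doubling character of $\mu$, and the presence of possible atoms on spheres, must be controlled with care; once it is in place, everything else is a one-variable computation.
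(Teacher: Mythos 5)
Your proposal is correct and follows essentially the same route as the paper: where you slice the kernel continuously via $u^{\alpha-1}=(1-\alpha)\int_u^\infty t^{\alpha-2}\,dt$ and bound $G(t)$ by $\min\bigl(t\,M_2f(x),\,t^{1-1/p}\|f\|_{\mathcal{M}^p_1(2,\mu)}\bigr)$, the paper discretizes the same quantity dyadically via the radii $R_k(x)=\inf\{R>0:\mu(B(x,2R))>2^k\}$ and sums $\sum_k 2^{k\alpha}\min\bigl(M_2f(x),2^{-k/p}\|f\|_{\mathcal{M}^p_1(2,\mu)}\bigr)$, so the two key estimates and the optimization at the crossover point are identical. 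The open/closed-ball subtlety you flag at the end is treated in the paper by the same kind of limiting device (an $\varepsilon$-enlargement $\mu(B(x,2d(x,y)+\varepsilon))$ followed by $\varepsilon\downarrow 0$), so your argument is not missing anything relative to the paper's own level of rigor.
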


\begin{proof}
Let $x\in X$ be fixed. We define
\[
R_k(x):=\inf\left(\{R>0:\mu(B(x,2R))>2^k\}\cup \{\infty\} \right).
\]
Then, we have
\begin{align*}
\lefteqn{
    |I_\alpha f(x)|
}\\
&\le 
\sum_{k=-\infty}^{\infty}\lim_{\varepsilon \downarrow 0}\int_{B(x,R_k(x))\setminus B(x,R_{k-1}(x))}
     \frac{|f(y)|}{\mu(B(x,2d(x,y)+\varepsilon))^{1-\alpha}}\;d\mu(y)\\
     &= \sum_{k=-\infty}^{\infty}\lim_{\varepsilon \downarrow 0}\int_{B(x,R_k(x))\setminus B(x,R_{k-1}(x))}
     \frac{|f(y)|}{\mu(B(x,2R_{k-1}(x)+\varepsilon))^{1-\alpha}}\;d\mu(y)\\
     &\le \sum_{k=-\infty}^{\infty}\lim_{\varepsilon \downarrow 0}
     \frac{1}{\mu(B(x,2R_{k-1}(x)+\varepsilon))^{1-\alpha}}\int_{B(x,R_k(x))\setminus B(x,R_{k-1}(x))}
     |f(y)|\;d\mu(y)\\
     &\le \sum_{k\in \mathbb{Z};R_{k-1}(x)<R_k(x)}
\lim_{\varepsilon \downarrow 0}
     \frac{1}{\mu(B(x,2R_{k-1}(x)+\varepsilon))^{1-\alpha}}
\int_{B(x,R_k(x))}|f(y)|\;d\mu(y).
\end{align*}
The condition $R_{k-1}(x)<R_k(x)$ means that
$$
2^{k-1}<\mu(B(x,2R_{k-1}(x)+\varepsilon))\le 2^k
$$ 
for each
$\varepsilon \in (0, R_k(x)-R_{k-1}(x))$. Therefore
\begin{align*}
    |I_\alpha f(x)|
     &\le C \sum_{k=-\infty}^{\infty} 2^{k\alpha} \min \left( M_2f(x),2^{-k/p}\|f\|_{\mathcal{M}^p_1(2,\mu)}\right)\\
     &\le C  Mf(x)^{1-p\alpha}\|f\|_{\mathcal{M}^p_1(2,\mu)}^{p\alpha}.
\end{align*}
Thus, the estimate is proved.
\end{proof}

Now we prove Theorem \ref{T3}.

\begin{proof}
For $|I_\alpha f(x)|>\gamma$, Theorem \ref{T2} gives us
\[
M_2f(x)>
\left( \frac{\gamma}{C\|f\|_{\mathcal{M}^p_1(2,\mu)}^{p\alpha}}
\right)^{1/(1-p\alpha)}.
\]
Hence, by applying Theorem \ref{T1}, we obtain
\begin{align*}
\lefteqn{
    \mu\{x\in B(a,r):|I_\alpha f(x)|>\gamma \}
}\\
&\le \mu\left\{x\in B(a,r):M_2f(x)>\left( \frac{\gamma}{C\|f\|_{\mathcal{M}^p_1(2,\mu)}^{p\alpha}} \right)^{1/(1-p\alpha)} \right\}\\
    & \le C\mu (B(a,6r))^{1-1/p}\|f\|_{\mathcal{M}^p_1(2,\mu)}
\left(\frac{\|f\|_{\mathcal{M}^p_1(2,\mu)}^{p\alpha}}{\gamma}\right)^{1/(1-p\alpha)}\\
    &\le C\mu (B(a,6r))^{1-1/p}
\frac{\|f\|_{\mathcal{M}^p_1(2,\mu)}^{1+\alpha s}}{\gamma ^{s/p}}\\
     & \le C\mu (B(a,6r))^{1-1/p}
\left(\frac{\|f\|_{\mathcal{M}^p_1(2,\mu)}}{\gamma}\right)^{s/p}.
\end{align*}
Thus, the proof is complete.
\end{proof}

Theorem \ref{T6} can be proved in a similar way
by using Theorem \ref{T2}.
\bigskip

\noindent{\bf Acknowledgments}. The first
author 
was supported by Fundamental Research Program 2012
by Directorate General of Higher Education, Ministry of Education and Culture, 
Indonesia.
The second author was financially supported 
by Grant-in-Aid for Young Scientists (B), No. 21740104,
Japan Society for the Promotion of Science.

This research project is supported
by the GCOE program of Kyoto University.

\bigskip

\noindent {\sc Idha SIHWANINGRUM}\\
Faculty of Sciences and Engineering\\
Jenderal Soedirman University\\ Purwokerto, 53122 Indonesia\\
email: idha.sihwaningrum@unsoed.ac.id

\medskip

\noindent {\sc Yoshihiro Sawano}\\
Department of Mathematics and Information Sciences\\
Tokyo Metropolitan University\\
Tokyo 192-0397, Japan \\
email: ysawano@tmu.ac.jp\\

Received:

\end{document}